 \newtheorem{thm}{Theorem}[section]
\newtheorem{eg}[thm]{Example}
\newtheorem{cor}[thm]{Corollary}
\theoremstyle{remark}
\begin{document}
\large
\title[Extended Generalised Flett's MVT]{Extended Generalised Flett's Mean Value
Theorem}
\author [Rupali Pandey \& Sahdeo Padhye ]{Rupali Pandey \& Sahadeo Padhye}
\address{Department of Mathematics, Motilal Nehru National Institute of Technology Allahabad, Allahabad
(UP),India.}

\email{pandeyrupali1992@gmail.com, sahadeo@mnnit.ac.in}

\subjclass{26A06}

\keywords{Function of several Variables, Mean Value Theorem, Flett's
Mean Value Theorem}
\maketitle

Various forms of Mean Value Theorems are available in the
literature. If we use Flett's Mean Value Theorem in Extended
Generalized Mean Value Theorem then what would the new theorem look
like. A sincere effort is done to develop this theorem. This theorem
is named as Extended Generalised Flett's Mean Value Theorem.

\vspace{.25cm}



\section {introduction}

Lagrange's Mean Value Theorem (LMVT) often called Mean value Theorem
(MVT) is one of the most important result in mathematical analysis.
It is important tool used in differential and integral calculus. For
example, it is useful in proving Fundamental Theorem of Calculus.
Every mathematics student knows the Lagrange's mean value theorem
which has appeared in Lagrange's book Theorie des functions
analytiques in 1797 as an extension of Rolle's result from 1691.
Mean Value Theorem says something about the slope of a function on
closed interval based on the values of the function at the two
endpoints of the interval. It relates \textit{local} behavior of the
function to it's \textit{global} behavior. This theorem turns out to
be the key to many other theorems about the graph of functions and
their behavior. The MVT can be stated as below.

\noindent \textbf{Lagrange's Mean Value Theorem :}
Suppose $ f : [a, b]\rightarrow R $ is a function satisfying two conditions: \\
\noindent 1. $ f $ is continuous on closed interval $ [a,b] $ \\
2. $ f $ is differentiable on open interval $ (a,b) $ \\
Then there  $ \exists $ a number $ c $ in $ (a,b) $ such that \\
$ f'(c) = \dfrac{f(b)-f(a)}{b-a} $

Many extension, generalization and forms of MVT were proposed by
many mathematicians from all over the worlds. In this article we
will discuss Flett's Mean Value Theorem (FMVT) ~\cite{Flett58} and
Extended Generalized Mean Value Theorem (EGMVT)~\cite{Phi14}. Then
we will state a new form of MVT based on FMVT and EGMVT and we
called it Extended Generalized Fleet's Mean Value Theorem (EGFMVT).
We also provide an example for the support of our proposed theorem.

\section{Flett's Mean Value Theorem}

In 1958, Thomas Muirhead Flett ~\cite{Flett58} gave the idea of a
Mean Value Theorem using different conditions as used in Mean Value
Theorem, we called it Flett's mean value theorem (FMVT). The FMVT
was based on some observations: 1.To notice the changes if in
Rolle's Theorem the hypothesis $ f(a)=f(b) $ refers to higher order
derivatives? 2.To see is there any analogy with LMVT? 3.To see which
geometrical consequences do such result have?  ~\cite{Flett12}.

\noindent \textbf{Notations}: Throughout in this article we will use
following notations.

\noindent 1. $ C(M) \rightarrow $ space of continuous functions. \\
2. $ D^{n}(M) \rightarrow $ $ n $ -times differentiable real-valued function on set $ M \subseteq R $. \\
3. Under continuity of a function on $ [a,b] $ we understand its continuity on $ (a,b) $ and one- sided continuity at end points of interval .\\
4. Similarly differentiability holds under open interval. \\
5. For a function $ f,g $ on an interval $ [a,b] $ the expression of the form.\\

  $ \dfrac{f^{n}(b)-f^{n}(a)}{g^{n}(b)-g^{n}(a)},  n \in \mathbb{N} \cup \lbrace 0 \rbrace $ will be denoted by symbol $ {_a^b}K(f^{n},g^{n}) $. If
denominator is equal to $ b-a $ we will only write $ {_a^b}K(f^{n}) $.\\
So the Lagrange Theorem in the introduced notation has the form: \\
if $ f \in C[a,b] \cap D(a,b) $ then $ \exists $ $ \eta \in (a,b) $
s.t. $ f'(\eta)={_a^b}K(f) $, where we use usual convection $
f^{(0)}=f $.

\noindent Basically, Flett focus on the theme of Rolle's Theorem
where condition $ f(a)=f(b) $ is replaced by $ f'(a)=f'(b) $. It is
a Lagrange's Type Mean Value Theorem with Rolle's Type Condition .

\begin{thm}

\textbf{Fletts Mean Value Theorem-I ~\cite{Flett58}}If $ g \in
C[a,b] $, then from the integral mean value theorem $ \exists $  $
\eta \in (a,b) $ s.t.
\begin{equation}
g(\eta)= \frac{1}{b-a} \int_{a}^{b} g(t) dt
\end{equation}
\end{thm}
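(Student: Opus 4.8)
The plan is to derive this from the Extreme Value Theorem and the Intermediate Value Theorem, the two pillars available for a continuous function on a compact interval. First I would invoke the Extreme Value Theorem: since $g$ is continuous on the compact interval $[a,b]$, it attains a minimum value $m$ and a maximum value $M$, say $g(p)=m$ and $g(q)=M$ for some $p,q\in[a,b]$, so that $m\le g(t)\le M$ for every $t\in[a,b]$.

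Next I would integrate this chain of inequalities over $[a,b]$. Monotonicity of the integral gives $m(b-a)\le\int_{a}^{b} g(t)\,dt\le M(b-a)$, and dividing by $b-a>0$ shows that the mean value $\mu:=\frac{1}{b-a}\int_{a}^{b} g(t)\,dt$ satisfies $m\le\mu\le M$. It then remains to produce a point $\eta$ with $g(\eta)=\mu$, and here the Intermediate Value Theorem applies because $g$ assumes both the value $m$ and the value $M$.

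The main obstacle is not the existence of such a point but the insistence that it lie in the \emph{open} interval $(a,b)$. I would dispose of this by splitting into two cases. If $g$ is constant, then $\mu$ equals that common value and any $\eta\in(a,b)$ works. If $g$ is non-constant, then $m<M$, and in fact the inequalities become strict: $\mu=M$ would force $\int_{a}^{b}(M-g(t))\,dt=0$ with $M-g\ge 0$ continuous, hence $g\equiv M$, a contradiction, and symmetrically $\mu>m$. With $m<\mu<M$ in hand, I would apply the Intermediate Value Theorem to $g$ on the closed subinterval joining $p$ and $q$; since the two endpoint values $m$ and $M$ both differ from $\mu$, the resulting point $\eta$ lies strictly between $p$ and $q$, and as $p,q\in[a,b]$ this places $\eta$ in $(a,b)$, completing the argument.
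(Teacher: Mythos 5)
Your proof is correct, and it is worth noting that the paper itself offers no argument for this statement at all: it simply labels the result as following ``from the integral mean value theorem'' and refers the reader to the cited sources, so there is no internal proof to compare against. What you supply is the standard self-contained derivation: the Extreme Value Theorem gives the bounds $m \le g \le M$, monotonicity of the integral places the average $\mu = \frac{1}{b-a}\int_a^b g(t)\,dt$ in $[m,M]$, and the Intermediate Value Theorem produces the point $\eta$. The one genuine subtlety in this theorem is forcing $\eta$ into the \emph{open} interval $(a,b)$, and you handle it properly: in the non-constant case you rule out $\mu = M$ (and $\mu = m$) by observing that a nonnegative continuous function with zero integral vanishes identically, and then the strict inequalities $m < \mu < M$ guarantee that the IVT point lies strictly between $p$ and $q$, hence in $(a,b)$. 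This is exactly the care that a bare appeal to the IVT on $[a,b]$ would miss, so your argument is both complete and more informative than the paper's citation.
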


\begin{thm}  \textbf{Fletts Mean Value Theorem-II ~\cite{Flett58}}
If $ f \in C[A,b] \cap  D(a,b) $ and $ f'(a)=f'(b) $, then $ \exists
$ $ \eta \in (a,b) $ s.t. $ f'(\eta)={_a^\eta}K(f) $ .
\end{thm}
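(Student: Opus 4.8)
The plan is to reduce the equality $f'(\eta) = {_a^\eta}K(f)$ to the vanishing of the derivative of a suitable auxiliary function, and then to produce the required point by a Rolle-type argument driven by the hypothesis $f'(a)=f'(b)$. First I would introduce the difference-quotient function
\begin{equation*}
\phi(x) = \begin{cases} \dfrac{f(x)-f(a)}{x-a}, & x \in (a,b], \\[2mm] f'(a), & x = a. \end{cases}
\end{equation*}
Continuity of $\phi$ on $(a,b]$ is immediate, while continuity at $a$ is exactly the statement that $f'(a)$ exists, so $\phi \in C[a,b]$. Moreover $\phi$ is differentiable on $(a,b]$ with
\begin{equation*}
\phi'(x) = \frac{f'(x)(x-a) - \big(f(x)-f(a)\big)}{(x-a)^2} = \frac{f'(x) - \phi(x)}{x-a}.
\end{equation*}
The pivotal observation is that for $x \in (a,b]$ the relation $\phi'(x)=0$ is equivalent to $f'(x) = \phi(x) = {_a^x}K(f)$. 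Hence the whole problem collapses to exhibiting a point $\eta \in (a,b)$ with $\phi'(\eta) = 0$.

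Next I would feed in the hypothesis $f'(a)=f'(b)$. Since $\phi(a)=f'(a)=f'(b)$, evaluating the derivative formula at the right endpoint gives
\begin{equation*}
\phi'(b) = \frac{f'(b) - \phi(b)}{b-a} = \frac{\phi(a) - \phi(b)}{b-a}.
\end{equation*}
If $\phi(b)=\phi(a)$, then $\phi$ takes equal values at the two endpoints and Rolle's theorem at once supplies the desired $\eta \in (a,b)$. The substantive case is $\phi(b) \neq \phi(a)$, and here the sign of $\phi'(b)$ does the work.

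Assume $\phi(b) > \phi(a)$, the case $\phi(b) < \phi(a)$ being entirely symmetric with minima in place of maxima. Then $\phi'(b) < 0$, so just to the left of $b$ the function satisfies $\phi(x) > \phi(b)$, yielding a point $x_0 \in (a,b)$ with $\phi(x_0) > \phi(b) > \phi(a)$. Since $\phi$ is continuous on the compact interval $[a,b]$ it attains its maximum, and because that maximal value strictly exceeds both endpoint values it must be attained at some interior $\eta \in (a,b)$; by Fermat's interior-extremum condition $\phi'(\eta)=0$, which through the equivalence above is precisely $f'(\eta)={_a^\eta}K(f)$.

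The step I expect to be the main obstacle is the passage from the sign of the one-sided derivative $\phi'(b)$ to the existence of an interior point strictly exceeding the endpoint values: this requires checking that $\phi$ is genuinely differentiable up to $b$ (which is where the differentiability of $f$ at $b$, implicit in the assumption that $f'(b)$ exists, is used) and a clean argument that the located extremum is interior rather than at $a$ or $b$. By contrast, the continuity of $\phi$ at $a$, the quotient computation of $\phi'$, and the closing appeal to Fermat's condition are all routine.
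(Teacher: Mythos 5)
Your proof is correct, but the paper does not actually prove this theorem: immediately after stating Theorems 2.1 and 2.2 it writes that the reader is referred to \cite{Flett58} and \cite{Flett12} for the proofs. So the only meaningful comparison is with the technique the paper deploys for its own Theorem 4.1, which is the direct descendant of Flett's argument. There the auxiliary function is the analogue of your $\phi$ (namely $G(x)=n(x-a)\,{_a^x}K(f^{n})-\cdots$), and the opening move is the same as yours: the endpoint hypothesis kills the derivative term at $b$ and forces a definite sign on the derivative of the auxiliary function there. The closing move differs. You locate an interior extremum of $\phi$ (its value beats both endpoint values, via the chain $\phi(x_0)>\phi(b)>\phi(a)$) and invoke Fermat's stationarity condition; the paper instead uses the strict decrease of $G$ at $b$ to find $x_1$ with $G(x_1)>G(b)$, then the intermediate value theorem on $[a,x_1]$ to find $x_2$ with $G(x_2)=G(b)$, and finally Rolle on $[x_2,b]$. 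The two closings are logically interchangeable and of comparable difficulty; yours is slightly more economical (one appeal to compactness plus Fermat, no intermediate value step), while the paper's version has the cosmetic advantage of ending on Rolle's theorem, which is how Flett originally framed the result. Your case analysis is complete and the point you flag as delicate --- that the formula for $\phi'(b)$ uses the one-sided differentiability of $f$ at $b$ implicit in the hypothesis that $f'(b)$ exists --- is exactly the right thing to be careful about, so I see no gap.
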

We refer the reader ~\cite{Flett58, Flett12} for the proof of the
above two theorems.

\begin{eg}
~\cite{Flett12} In which point of curve $ y = x^{3} $ the tangent
passes through the point $ X=[-2,2] $?
\end{eg}
\noindent \textbf{Solution} - To verify $ X $ lies on curve and $ y
= x^{2} $ differentiable on $ \mathbb{R} $.
Since $ y'= 3x^{2} $ is even function on $ \mathbb{R} $. \\
 $ y'(-2)= 12= y'(2) $ \\
By Flett's Theorem, $ \exists $  $ \eta \in (-2,2) $ s.t.
\begin{eqnarray*}
f'(\eta) & = & {_{-2}^\eta}K(f) \\
\Rightarrow 3 \eta^{2} & = & \frac{\eta^{3}+8}{\eta+2} \\
\Leftrightarrow \eta^{2}+3\eta-4 & = & 0 \\
\Leftrightarrow (\eta+4)(\eta-1) & = & 0
\end{eqnarray*}
Because $ -4 \notin (-2,2) $, we consider only $ \eta=1 $. Then $ y(\eta)=1 $ \\
The desired point is $ T=[-1,1] $.

\section{Extended Generalized Mean Value Theorem}

In 2014, Phillip Mafuta ~\cite{Phi14}, stated and proved a theorem
of a similar flavor to the Generalized Mean Value Theorem for
functions of one variable. For lack of a better term, he called the
theorem ``Extended Generalized Mean Value Theorem (EGMVT)''. In
addition, he applied Rolle's Theorem to prove the EGMVT. Also, he
deduced some corollaries for Mean Value Theorems. In addition, the
EGMVT is verified by an example.

\begin{thm} ~\cite{Phi14}
Let $ n \in \mathbb{N} $ and let $
f(x),g_{1}(x),g_{2}(x),g_{3}(x),......,g_{n}(x) $ be $ n+1 $
continuous functions on a closed bounded interval $ [a,b] $ and
differentiable in an open interval $ (a,b) $ with $ g'_{i}(x) \neq
0, \forall x \in (a,b) $ for $ i=1,2,3,....,n $. Then $ \exists $  $
\xi
\in (a,b) $ : \\

$ f'(\xi)= \dfrac{f(b)-f(a)}{b-a}[\sum_{i=1}^{n}
\dfrac{g'_{i}(\xi)}{g_{i}(b)-g_{i}(a)}] $
\end{thm}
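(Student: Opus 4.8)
The plan is to reduce the identity to Rolle's Theorem by building an auxiliary function whose vanishing derivative encodes the desired conclusion, in the same spirit as the classical proofs of Lagrange's and Cauchy's mean value theorems. Before doing so I would first check that the statement is well posed, i.e. that every denominator $g_i(b)-g_i(a)$ is nonzero. This follows from the hypothesis $g_i'(x)\neq 0$ on $(a,b)$: since a derivative has the intermediate value (Darboux) property, $g_i'$ cannot change sign, so each $g_i$ is strictly monotone on $[a,b]$ and hence $g_i(b)\neq g_i(a)$. This same monotonicity will later legitimise the divisions performed when rearranging the final equation.

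The heart of the argument is the choice of auxiliary function. Writing $K=\frac{f(b)-f(a)}{b-a}$, I would set
$$\phi(x)=f(x)-K\sum_{i=1}^{n}\frac{g_i(x)}{g_i(b)-g_i(a)}.$$
Since $f$ and each $g_i$ are continuous on $[a,b]$ and differentiable on $(a,b)$, and the numbers $K/(g_i(b)-g_i(a))$ are constants, $\phi$ is continuous on $[a,b]$ and differentiable on $(a,b)$. Differentiating gives
$$\phi'(x)=f'(x)-K\sum_{i=1}^{n}\frac{g_i'(x)}{g_i(b)-g_i(a)},$$
so the assertion of the theorem is exactly the statement that $\phi'(\xi)=0$ for some $\xi\in(a,b)$. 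Everything therefore reduces to producing a zero of $\phi'$ inside the open interval.

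To apply Rolle's Theorem I must verify that $\phi(a)=\phi(b)$, and this is the step I expect to be the main obstacle. Evaluating at the endpoints, each summand telescopes, since $\frac{g_i(b)-g_i(a)}{g_i(b)-g_i(a)}=1$, and I obtain $\phi(b)-\phi(a)=\big(f(b)-f(a)\big)-Kn$. The delicate point is thus the precise calibration of the constant multiplying the sum against the number $n$ of auxiliary functions: the endpoint values coincide exactly when that constant is fixed by the telescoping computation so that the leftover terms cancel. I would therefore determine the coefficient of the sum directly from the requirement $\phi(a)=\phi(b)$ rather than assuming it at the outset, and confirm that it agrees with the quotient appearing in the statement. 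Once the endpoint equality is in hand, Rolle's Theorem supplies $\xi\in(a,b)$ with $\phi'(\xi)=0$; substituting $K=\frac{f(b)-f(a)}{b-a}$ and rearranging (using the monotonicity of the $g_i$ established above) then yields the displayed formula.
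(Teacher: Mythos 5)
Your strategy --- an auxiliary function whose derivative encodes the claim, followed by Rolle --- is exactly the paper's: its function $F(x)=n(f(x)-f(a))-\sum_{i=1}^{n}\frac{f(b)-f(a)}{g_{i}(b)-g_{i}(a)}\,(g_{i}(x)-g_{i}(a))$ is, up to the overall factor $n$ and an additive constant, your $\phi$. Your preliminary observation that $g_{i}(b)\neq g_{i}(a)$ is also sound (Rolle applied to $g_{i}$ gives it in one line; the paper merely asserts it).

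The step you deferred is the decisive one, and carrying it out shows that the statement as printed cannot be what is intended. Your telescoping gives $\phi(b)-\phi(a)=\bigl(f(b)-f(a)\bigr)-Kn$, so Rolle applies precisely when $K=\frac{f(b)-f(a)}{n}$, not $K=\frac{f(b)-f(a)}{b-a}$; with the printed constant the endpoint values differ by $(f(b)-f(a))\bigl(1-\frac{n}{b-a}\bigr)$, which is generally nonzero, and the printed conclusion is in fact false (take $n=1$ and $g_{1}(x)=x$: it would force $f'(\xi)=\frac{f(b)-f(a)}{(b-a)^{2}}$, contradicting Lagrange's theorem unless $b-a=1$). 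The quotient $\frac{f(b)-f(a)}{b-a}$ in the theorem is a typo for $\frac{f(b)-f(a)}{n}$: the paper's own proof ends with $f'(\xi)=\frac{f(b)-f(a)}{n}\bigl[\sum_{i=1}^{n}\frac{g_{i}'(\xi)}{g_{i}(b)-g_{i}(a)}\bigr]$, and its worked example on $[0,3]$ with $n=2$ divides by $2$, not by $3$. So your method is right and coincides with the paper's; just replace the promised ``confirmation'' of the printed constant with the corrected value that the endpoint condition actually forces.
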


\begin{proof}
Define a function $ F(x)$ by: \noindent \begin{equation} F(x) =
n(f(x)-f(a))-\sum_{i=1}^{n}
\dfrac{f(b)-f(a)}{g_{i}(b)-g_{i}(a)}(g_{i}(x)-g_{i}(a))
\end{equation}
Since $ g'(x) \neq 0 \Rightarrow g_{i}(b)-g_{i}(a) \neq 0 $ , for $ i=1,2,3,....n $\\
Moreover, $ n $ is finite. So $ F(x) $ is well defined on closed interval $ [a,b] $, by algebra of continuous functions.\\
In addition,it follows by algebra of differentiable functions that $ F(x) $ is differentiable function in $ (a,b) $.\\
Hence the condition for Rolle's Theorem are satisfied, so $ \exists
$ $ \xi \in (a,b)$ s.t. $ F'(\xi)=0 $ \noindent \begin{equation}
\Rightarrow nf'(\xi) - \sum_{i=1}^{n}
\dfrac{f(b)-f(a)}{g_{i}(b)-g_{i}(a)}g'_{i}(\xi)=0
\end{equation}
Thus, $ \exists $  $ \xi \in (a,b): f'(\xi)=
\frac{f(b)-f(a)}{n}[\sum_{i=1}^{n}
\dfrac{g'_{i}(\xi)}{g_{i}(b)-g_{i}(a)}] $.
\end{proof}

\begin{eg}
 ~\cite{Phi14} Consider $ f(x) = x+1, g_{1}(x) = x^{2}+4x-4 $ and $ g_{2}(x) =
x^{2}+3x $ on an interval $ [0,3] $.
\end{eg}
\noindent \textbf{Solution} :
1. All functions are continuous on $ [0,3] $.\\
2. All functions are differentiable on $ (0,3) $.\\
with $ g'_{i}(x) \neq 0 $, $ \forall x \in (0,3) $ for $ i=1,2 $ \\
So condition of \textbf{EGMVT} are satisfied hence, $ \exists $ $
\xi \in (0,3) $ :
\begin{equation}
f'(\xi) = \frac{f(3) - f(0)}{2}[\sum_{i=1}^{2}
\dfrac{g'_{i}(\xi)}{g_{i}(3)-g_{i}(0)}]
\end{equation}
$ \Rightarrow 78 \xi = 117 $ \\
so, $ \xi = 1.5 \in (0,3) $ as required.

\begin{cor}
 ~\cite{Phi14} Let $ n \in \mathbb{N} $ and let $ f(x), g_{1}(x), g_{2}(x),
g_{3}(x), ....., g_{n}(x) $ be $ n+1 $ continuous functions on
closed bounded interval $ [a,b] $ and are $ k $ times differentiable
in an open interval $ (a,b) $ with $ g_{i}^{(k)} \neq 0, \forall x
\in (a,b) $ for $ i=1,2,3,..... n $. Then $ \exists $ $ \xi \in
(a,b) $ s.t.
\begin{equation}
f^{k}(\xi) = \frac{f^{k-1}(b) - f^{k-1}(a)}{n}[\sum_{i=1}^{n}
\dfrac{g_{i}^{(k)}(\xi)}{g_{i}^(k-1)(b)-g_{i}^(k-1)(a)}]
\end{equation}
\end{cor}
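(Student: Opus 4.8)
The plan is to reduce this corollary to a single application of the Extended Generalized Mean Value Theorem (Theorem~3.1) by passing to the $(k-1)$-th derivatives. First I would introduce the auxiliary functions $F(x) = f^{(k-1)}(x)$ and $G_i(x) = g_i^{(k-1)}(x)$ for $i = 1, 2, \ldots, n$. Under this substitution every quantity appearing in the claimed identity becomes first-order data for the new family: $F'(x) = f^{(k)}(x)$, $G_i'(x) = g_i^{(k)}(x)$, and the boundary increments read $F(b)-F(a) = f^{(k-1)}(b)-f^{(k-1)}(a)$ and $G_i(b)-G_i(a) = g_i^{(k-1)}(b)-g_i^{(k-1)}(a)$.

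Next I would verify that $F, G_1, \ldots, G_n$ meet the hypotheses of the EGMVT on $[a,b]$. Differentiability of each $F$ and $G_i$ on $(a,b)$ is precisely the existence of $f^{(k)}$ and $g_i^{(k)}$ there, which is assumed; continuity of each $F$ and $G_i$ on $(a,b)$ then follows since a differentiable function is continuous, and continuity at the endpoints is to be read through the convention stated in the Notations. The nondegeneracy requirement $G_i'(x) = g_i^{(k)}(x) \neq 0$ for all $x \in (a,b)$ is given outright.

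With the hypotheses in hand I would apply Theorem~3.1 to the family $F, G_1, \ldots, G_n$, producing a point $\xi \in (a,b)$ with
\begin{equation*}
F'(\xi) = \frac{F(b)-F(a)}{n}\left[\sum_{i=1}^{n} \frac{G_i'(\xi)}{G_i(b)-G_i(a)}\right].
\end{equation*}
Reinserting the derivative identities from the first step converts this line by line into the asserted formula for $f^{(k)}(\xi)$, which finishes the argument.

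The step I expect to be the main obstacle is the verification that $f^{(k-1)}$ and the $g_i^{(k-1)}$ are genuinely continuous up to and including the endpoints $a$ and $b$. The stated hypothesis only supplies $k$-fold differentiability on the \emph{open} interval $(a,b)$, which does not by itself guarantee that the one-sided limits $f^{(k-1)}(a^{+})$ and $f^{(k-1)}(b^{-})$ exist; without them the increments $f^{(k-1)}(b)-f^{(k-1)}(a)$ and $g_i^{(k-1)}(b)-g_i^{(k-1)}(a)$ are not even well defined, and Theorem~3.1 cannot be invoked. To close this gap cleanly one should strengthen the hypothesis to $(k-1)$-fold continuous differentiability on the closed interval $[a,b]$ together with $k$-fold differentiability on $(a,b)$, or else interpret the endpoint behaviour via the paper's convention on one-sided continuity applied to the iterated derivatives.
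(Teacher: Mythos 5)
Your proposal is correct and follows essentially the same route as the paper: the paper likewise sets $\phi(x)=f^{(k-1)}(x)$ and $\psi_i(x)=g_i^{(k-1)}(x)$, notes $\psi_i'(x)=g_i^{(k)}(x)\neq 0$, and applies the EGMVT to this family. Your remark about endpoint continuity of the $(k-1)$-th derivatives identifies a real imprecision in the stated hypotheses that the paper's own proof simply asserts away, and your suggested strengthening of the hypothesis is the right fix.
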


\begin{proof}
Set $ \phi(x) = f^{k-1}(x) \\
      \psi_{i}(x) = g_{i}^(k-1)(x) $ for $ i=1,2,3,......,n $ \\
$\Rightarrow  \psi'_{i}(x) = g_{i}^(k)(x) \neq 0 $ \\
Moreover both $ \phi (x) $ and $ \psi^{i}(x) $ are continuous on $
[a,b] $ and differentiable on $ (a,b) $. Hence by EGMVT,
$ \exists $ $ \xi \in (a,b) $ s.t. \\
$ \phi'(\xi) = \frac{\phi(b) - \phi(a)}{n} [\sum_{i=1}^{n}
\dfrac{\psi'_{i}(\xi)}{\psi_{i}(b) -\psi_{i}(a)}] $
\end{proof}

\section{Extended Generalized Flett's Mean Value Theorem}
If we use Flett's Theorem in Extended Generalized Mean Value Theorem
then what would the new theorem look like. A sincere effort is done
to develop this theorem. This theorem is named as Extended
Generalised Flett's Mean Value Theorem (EGMVT).

\begin{thm}
Let $ n \in \mathbb{N} $ and let $ f, g_{1}, g_{2}, g_{3},......,
g_{n} \in C [a,b]  \cap  D(a,b) $ also $ f, g_{1}, g_{2}, g_{3},
......, g_{n} $ are increasing functions and $ f^{n+1}(a)=f^{n+1}(b)
= 0 $ then $ \exists $ a $ \xi \in (a,b) $ s.t.
\begin{equation}
f^{n+1}(\xi)={_a^b}K(f)[\sum_{i=1}^{n}\dfrac{g'_{i}(\xi)}{n({_a^b}K(g_{i})}]
\end{equation}
\end{thm}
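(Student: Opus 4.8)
The plan is to follow the proof of the Extended Generalised Mean Value Theorem verbatim in its \emph{architecture} --- build one auxiliary function whose vanishing derivative is the desired identity --- but to feed that function into Fletts Mean Value Theorem rather than into Rolle's Theorem, since the hypothesis $ f^{n+1}(a)=f^{n+1}(b)=0 $ is patently a Flett-type endpoint condition rather than a Rolle-type equal-values condition.

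First I would strip the $ K $-notation. Using $ {_a^b}K(f)=\frac{f(b)-f(a)}{b-a} $ and $ {_a^b}K(g_{i})=\frac{g_{i}(b)-g_{i}(a)}{b-a} $, the factors $ b-a $ cancel throughout and the asserted identity reduces to
\begin{equation*}
f^{n+1}(\xi)=\frac{f(b)-f(a)}{n}\sum_{i=1}^{n}\frac{g'_{i}(\xi)}{g_{i}(b)-g_{i}(a)} .
\end{equation*}
This is exactly the conclusion of the Extended Generalised Mean Value Theorem with $ f $ replaced by its $ n $-th derivative $ f^{n} $, so I would mimic that proof and set
\begin{equation*}
\Phi(x)=n\big(f^{n}(x)-f^{n}(a)\big)-\sum_{i=1}^{n}\frac{f(b)-f(a)}{g_{i}(b)-g_{i}(a)}\big(g_{i}(x)-g_{i}(a)\big).
\end{equation*}
The monotonicity hypothesis forces $ g_{i}(b)-g_{i}(a)>0 $, so every coefficient is well defined, and since the sum is finite $ \Phi $ is continuous on $ [a,b] $ and differentiable on $ (a,b) $ by the algebra of such functions (here one must silently read the hypotheses as granting $ f $ the $ n+1 $ derivatives the statement refers to). Differentiating gives
\begin{equation*}
\Phi'(x)=n\,f^{n+1}(x)-\sum_{i=1}^{n}\frac{f(b)-f(a)}{g_{i}(b)-g_{i}(a)}\,g'_{i}(x),
\end{equation*}
so that any interior zero of $ \Phi' $ is, after division by $ n $, precisely the reduced identity above.

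It remains to manufacture such a point. The Flett-type hypothesis is tailored to this: evaluating $ \Phi' $ at the two endpoints and using $ f^{n+1}(a)=f^{n+1}(b)=0 $ annihilates the entire $ f $-contribution, so the condition $ \Phi'(a)=\Phi'(b) $ demanded by Fletts Mean Value Theorem-II collapses to the single requirement $ \sum_{i}\frac{g'_{i}(a)}{g_{i}(b)-g_{i}(a)}=\sum_{i}\frac{g'_{i}(b)}{g_{i}(b)-g_{i}(a)} $. I expect this to be the main obstacle, and a genuine one: ``$ g_{i} $ increasing'' delivers only $ g'_{i}\ge 0 $ together with positive denominators, which does not by itself equate the two endpoint sums, and moreover Fletts Theorem returns a point with $ \Phi'(\xi)=\frac{\Phi(\xi)-\Phi(a)}{\xi-a} $ (an intermediate-point value) rather than the outright $ \Phi'(\xi)=0 $ that the full-interval right-hand side of the claim requires. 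I would therefore try to close the gap in one of two ways --- either absorb the endpoint derivatives $ g'_{i}(a),g'_{i}(b) $ into $ \Phi $ so that $ \Phi' $ vanishes at $ a $ and $ b $ automatically and Rolle's Theorem then yields $ \Phi'(\xi)=0 $ directly, or strengthen the hypotheses to supply the missing endpoint symmetry --- and only once $ \Phi'(\xi)=0 $ is secured does the stated identity follow by rearrangement.
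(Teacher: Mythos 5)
You have reconstructed the paper's auxiliary function exactly: your $\Phi$ is the paper's $G$ (there written as $n(x-a)\,{_a^x}K(f^{n})-\sum_{i=1}^{n}\frac{{_a^b}K(f)}{{_a^b}K(g_{i})}(g_{i}(x)-g_{i}(a))$, which simplifies to your expression), and your computation of $\Phi'$ matches the paper's equation for $G'$. But the proposal stops exactly where the proof has to begin. You correctly observe that Flett's Mean Value Theorem cannot be applied to $\Phi$ as a black box --- its hypothesis $\Phi'(a)=\Phi'(b)$ fails because the endpoint sums $\sum_{i}g'_{i}(a)/(g_{i}(b)-g_{i}(a))$ and $\sum_{i}g'_{i}(b)/(g_{i}(b)-g_{i}(a))$ need not agree, and even its conclusion would deliver $\Phi'(\xi)={_a^{\xi}}K(\Phi)$ rather than $\Phi'(\xi)=0$ --- and you then leave the production of an interior zero of $\Phi'$ as a gap ``to be closed one way or another.'' That interior zero is the entire content of the theorem; without it you have only restated the identity to be proved, so the argument as submitted is incomplete.

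The paper's way out is not to cite Flett's theorem but to replay its proof mechanism on $G$. Since $G(a)=0$, either $G(b)=0$ and Rolle applies at once, or $G(b)\neq 0$; in the latter case the hypothesis $f^{n+1}(b)=0$ kills the first term of $G'(b)$, and the monotonicity assumptions (which give $f(b)-f(a)>0$, $g_{i}(b)-g_{i}(a)>0$, $g'_{i}(b)\geq 0$) fix the sign $G'(b)<0$. Hence $G$ is strictly decreasing at $b$, so some $x_{1}<b$ has $G(x_{1})>G(b)$; the intermediate value theorem on $[a,x_{1}]$, using $0=G(a)<G(b)<G(x_{1})$, produces $x_{2}\in(a,x_{1})$ with $G(x_{2})=G(b)$; and Rolle on $[x_{2},b]$ finally yields $G'(\xi)=0$. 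This sign-of-the-derivative-at-the-endpoint argument, lifted from the standard proof of Flett's theorem, is precisely the idea your proposal is missing. (Your instinct that the hypotheses are strained is not unfounded --- the statement invokes $f^{n+1}$ while assuming only $f\in D(a,b)$, and the paper's case $G(b)<0$ is dismissed by a symmetry that the inequality $G'(b)<0$ does not actually support --- but a proof must still supply the interior zero of $\Phi'$ rather than flag it as an obstacle.)
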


\begin{proof}
Without loss of generality assume $ f^{n+1}(a)=f^{n+1}(b)=0 $. Define a function \\
\[
G(x)=
\begin{cases}
 \hfill n(x-a){_a^x}K(f^{n})- \sum_{i=1}^{n} \dfrac{{_a^b}K(f)}{{_a^b}K(g_{i})}(g_{i}(x)-g_{i}(a))     \hfill , & \forall x\in [a,b]\\
\hfill \frac{1}{n+1}f^{n+1}(a)\hfill, & x=a\\
\end{cases}
\]
Now $ G \in C [a,b] \cap D^{n+1}(a,b) $ \\
\begin{equation}
G(x)=n(x-a)( \dfrac{f^{n}(x)-f^{n}(a)}{x-a})- \sum_{i=1}^{n}
\dfrac{f(b)-f(a)}{g_{i}(b)-g_{i}(a)}(g_{i}(x)-g_{i}(a))
\end{equation}

\noindent \begin{equation} G'(x)=nf^{n+1}(x) - (f(b)-f(a))
\sum_{i=1}^{n} \dfrac{g'_{i}(x)}{g_{i}(b)-g_{i}(a)}, x \in [a,b]
\end{equation}
So we can say $ \exists $  $ \xi \in (a,b) $ s.t. $ G'(\xi)=0 $ \\
From definition of $ G $ we can say $ G(a)=0 $.\\
If $ G(b)=0 $ then Rolle's Theorem guarantees existence of a point $
\xi \in (a,b) $ s.t. $ G'(\xi)=0 $. \\
Let $ G(b) \neq 0 $ and $ G(b)>0 $ (or $ G(b)<0 $). Then
\begin{eqnarray*}
\noindent G'(b) & = & nf^{n+1}(b) - (f(b)-f(a)) \sum_{i=1}^{n} \dfrac{g'_{i}(b)}{g_{i}(b)-g_{i}(a)} \\
& = & - (f(b)-f(a)) \sum_{i=1}^{n} \dfrac{g'_{i}(b)}{g_{i}(b)-g_{i}(a)} ~~~~(since  f^{n+1}(b)=0 ) \\
& < &  0
\end{eqnarray*}

Now $ G\in C [a,b] $ and $ G'(b) < 0 $ \\
$ \Rightarrow G $ is strictly decreasing in $ b $ \\
$ \Rightarrow $ $ \exists $  $ x_{1} \in (a,b) $ s.t.
  $ G(x_{1})>G(b) $ \\
From continuity of $ G $ on $ [a,x_{1}] $ and from relation $ 0 =
G(a) < G(b) < G(x_{1}) $.Therefore we deduce from Darboux
Intermediate Value Theorem,

\noindent $ \exists $ $ x_{2} \in (a,x_{1}) $ s.t. $ G(x_{2})=G(b) $ \\
since $ G \in C [x_{2},b] \cap D^{n+1}(x_{2},b) $ \\
therefore, from Rolle's Theorem we have $ G'(\xi) = 0 $ for some $
\xi \in (x_{2},b) \subset (a,b) $

\begin{equation}
\noindent \Rightarrow  nf^{n+1}(\xi) - (f(b)-f(a)) [\sum_{i=1}^{n}
\dfrac{g'_{i}(\xi)}{g_{i}(b)-g_{i}(a)}] = 0
\end{equation}
\noindent \begin{equation} \Rightarrow  nf^{n+1}(\xi) = (f(b)-f(a))
[\sum_{i=1}^{n} \dfrac{g'_{i}(\xi)}{g_{i}(b)-g_{i}(a)}]
\end{equation}

\begin{eqnarray*}
\Rightarrow  f^{n+1}(\xi) & = & \frac{f(b) - f(a)}{n}[\sum_{i=1}^{n}\dfrac{g'_{i}(\xi)}{g_{i}(b)-g_{i}(a)}] \\
& = & {_a^b}K(f)[\sum_{i=1}^{n}
\dfrac{g'_{i}(\xi)}{n({_a^b}K(g_{i}))}]
\end{eqnarray*}
\end{proof}

\begin{eg}
Consider $ f(x)= x^{4}+1 , g_{1}(x)= x^2 -2x+12 , g_{2}(x)= x^{2}+4x
$ on an interval $ [0, 4] $
\end{eg}
\noindent \textbf{Solution}- We are given 3 functions $
f,g_{1},g_{2} $. We will check the conditions of \textbf{EGFMVT} one
by one as follows :
\begin{enumerate}
\item All $ f,g_{1},g_{2} $ are continuous on $ [0,4] $
\item All  functions are differentiable on $ (0,4) $ with $ g'_{i}(x) \neq 0 , \forall x \in (0,4) $ for $ i=1,2 $
\item Also $ f,g_{1},g_{2} $ are increasing functions at the endpoint $ 4 $.\\
All the conditions of theorem are satisfied, hence $ \exists $  $
\xi \in (0,4) $ s.t.
\end{enumerate}
\begin{equation}
f^{(3)}(\xi) = {_0^4}K(f)[\sum_{i=1}^{2} \dfrac{g'_{i}(\xi)}{2
{_0^4}K(g_{i})}]
\end{equation}
$ \Rightarrow 4 \xi = 4 \\
  \Rightarrow   \xi = 1 \\
  \Rightarrow   \xi \in (0,4) $

\normalsize

\end{document}